\newtheorem{theorem}{Theorem}
\newtheorem{proposition}{Proposition}
\newtheorem{corollary}{Corollary}
\newtheorem{remark}{Remark}
\newtheorem{definition}{Definition}
\renewcommand{\to}{\longrightarrow}
\renewcommand{\SS}{\mathrm{Sym^2}(C_2)}
\newcommand{\BB}{\mathsf {B}_2(C_2)}
\title{Monodromy representations and surfaces \\ with maximal Albanese dimension}
\author{Francesco Polizzi}
\date{}
\begin{document}
\maketitle

\epigraph{\itshape More and more I'm aware that the permutations are not unlimited.}{Russell Hoban, \textit{Turtle Diary}}

%%%%%%%%%%%%%%%%%%%%%%(abstract)%%%%%%%%%%%%%%%%%%%%%%%%%%%%%%%%%

\begin{abstract}
We relate the existence of some surfaces of general type and maximal Albanese dimension to the existence of some monodromy representations of the braid group $\mathsf{B}_2(C_2)$ in the symmetric group $\mathsf{S}_n$. Furthermore, we compute the number of such representations up to $n=9$, and we analyze the cases $n \in \{2, \, 3, \, 4\}$. For $n=2, \, 3$ we recover some surfaces with $p_g=q=2$ recently studied (with different methods) by the author and his collaborators, whereas for $n=4$ we obtain some conjecturally new examples.  

%\keywords{Braid group, monodromy representation, surface of general type}
%\subclass{14J29, 20F36}
\end{abstract}

%%%%%%%%%%%%%%%%%%%%%%%(MSC Classification)%%%%%%%%%%%%%%%%%%%%%%%%

\Footnotetext{{}}{\textit{2010 Mathematics Subject Classification}: 14J29, 20F36}

\Footnotetext{{}} {\textit{Keywords}: Braid group, monodromy representation, surface of general type}

%%%%%%%%%%%%%%%%%%%%%%%%%%%%%%%%%%%%%%%%%%%%%%%%%%%%%%%%%%%%%%%%%%%

\setcounter{section}{-1}

\section{Introduction} \label{sec:intro}

The classification of surfaces $S$ of general type with $\chi(\mathcal{O}_S)=1$, i.e. $p_g(S)=q(S)$, is currently an active area of research, see for instance the survey paper \cite{BaCaPi06}. For these surfaces, \cite[Th\'eor\`eme 6.1]{Deb82} implies $p_g \leq 4$, and the cases $p_g=q=4$ and $p_g=q=3$ are nowadays completely described, see \cite{CaCiML98}, \cite{HP02}, \cite{Pir02}. 

Regarding the case $p_g=q=2$, a complete classification has been recently obtained when $K_S^2=4$, see \cite{CMP14}, \cite{BPS16}. In fact, these are surfaces on the Severi line $K_S^2=4 \chi (\mathcal{O}_S)$; we refer the reader to the aforementioned papers and the references contained therein for a historical account on the subject and more details. 

By contrast, the classification in the case $p_g=q=2$, $K_S^2 \geq 5$ is still missing, albeit some interesting examples were recently discovered, see \cite{CH06}, \cite{PePol13a}, \cite{PePol13b}, \cite{PePol14}, \cite{PiPol16}, \cite{PolRiRo17}.

The purpose of this note is to show how monodromy representations of braid groups can be concretely applied to the fine classification of surfaces with $p_g=q=2$ and maximal Albanese dimension, allowing one to rediscover old examples and to find new ones.

The idea is to consider degree $n$, generic covers of $\SS$, the symmetric square of a smooth curve of genus $2$, simply branched over the diagonal $\delta$. In fact, if such a cover exists, then it is a smooth surface $S$ with
\begin{equation*}
\chi(\mathcal{O}_S)=1, \quad K_S^2=10-n,
\end{equation*} 
see Theorem \ref{thm:generic-cover}. Furthermore, if $p_g(S)=q(S)=2$ then the Albanese variety $\textrm{Alb}(S)$ is isogenous to the Jacobian variety $J(C_2)$ (Proposition \ref{prop:alb}) and, for a general choice of $C_2$, the surface $S$ contains no irrational pencils (Proposition \ref{prop:irrational-pencil}).

On the other hand, by the Grauert-Remmert extension theorem (see \cite{GrRem58}, \cite[XII.5.4]{SGA1}, \cite{DeGr94}) and the GAGA principle (see \cite{Serre56}, \cite[Chapter 6]{Serre08}), isomorphism classes of degree $n$, connected covers 
\begin{equation*}
f \colon S \to \SS, 
\end{equation*}
branched at most over $\delta$, correspond to group homomorphisms 
\begin{equation*}
\varphi \colon \pi_1(\SS - \delta) \to \mathsf{S}_n
\end{equation*}
with transitive image, up to conjugacy in $\mathsf{S}_n$. The group $\pi_1(\SS - \delta)$ is isomorphic to $\mathsf{B}_2(C_2)$, the braid group on two strings on $C_2$, whose presentation can be found for instance in \cite{Bel04}; furthermore, our condition that the branching is simple can be translated by requiring that $\varphi(\sigma)$ is a transposition, where $\sigma$ denotes the homotopy class in $\SS - \delta$ of a topological loop in $\SS$ that ``winds once" around $\delta$. 

A group homomorphism $\mathsf{B}_2(C_2) \to \mathsf{S}_n$ satisfying the requirements above will be called a \emph{generic monodromy representation} of $\mathsf{B}_2(C_2)$, see Definition \ref{def:generic-monodromy}. By using the Computer Algebra System \verb|GAP4| (see \cite{GAP4}) we computed the number of generic monodromy representations  for $2 \leq n \leq 9$, see Theorem \ref{thm:monodromy}. In particular, such a number is zero for $n \in \{5, \, 7, \, 9 \}$, so there exist no generic covers in these cases, see Corollary \ref{cor:no-surfaces}. For the reader's convenience, we included an Appendix containing the short script.        

The previous discussion can be now summarized as follows. 
\bigskip

$\textbf{Theorem}\; \,$\textit{Let $f \colon S \to \mathrm{Sym}^2(C_2)$ be a generic cover of degree $n$ and whose branch locus is the diagonal $\delta$. Then $S$ is a surface of maximal Albanese dimension with
$\chi(\mathcal{O}_S)=1$ and $K_S^2= 10-n$. Moreover, if $ 2 \leq n \leq 9$ then $S$ is of general type.}

\textit{The isomorphism classes of generic covers of degree} $n$  
 \textit{are in bijective correspondence to generic monodromy representations
$\varphi \colon \mathsf {B}_2(C_2) \to \mathsf {S}_n,$ up to conjugacy in $\mathsf {S}_n$. 
For $2 \leq n \leq 9$, the corresponding number of representations is given in the table below$:$}  
\begin{table}[H]
\begin{center}
\begin{tabular}{c|c|c|c|c|c|c|c|c}
%\hline
$ n$ & $2$ & $3$ & $4$ & $5$ & $6$ & $7$ & $8$ & $9$ \\
 \hline
$\textrm{Number of}$ $\varphi$ & $16$ & $3 \cdot 80$ & $6 \cdot 480$ & $0$ & $15 \cdot 2880$  & $0$ & $28 \cdot 172800$ & $0$\\
\end{tabular}
\end{center}
\end{table}

Such results are still far for being conclusive, for at least two reasons:
\begin{itemize}
\item[$\boldsymbol{(1)}$] no attempt has been made here in order to compute the number of 
generic monodromy representations $\varphi \colon \mathsf{B}_2(C_2) \to \mathsf{S}_n$ for all values of $n$. In principle, our \verb|GAP4| script could do this, but in practice it is not efficient enough when $n$ is big (the computation in the case $n=9$ already took several hours). Furthermore, it would be desirable to extend our methods to non-generic representations, i.e. to non-generic covers of $\SS$;
\item[$\boldsymbol{(2)}$] given a generic cover $f \colon S \to \SS$, corresponding to a generic monodromy representation $\varphi \colon \mathsf{B}_2(C_2) \to \mathsf{S}_n$, it is at the moment not clear how to explicitly compute $K_{\bar{S}}^2$, where $\bar{S}$ denotes the minimal model of $S$: in fact, we know no general procedure to determine whether $S$ contains some $(-1)$-curves, see Proposition \ref{prop:minimality-S}. 
\end{itemize}
These are interesting problems that we hope to address in the future.
\smallskip 

Let us explain now how this paper is organized. In Section \ref{sec:prel} we collect some preliminary results 
that are needed in the sequel of the work, namely the Grauert-Remmert extension theorem, the GAGA principle and their corollaries, Bellingeri's presentation for $\mathsf{B}_2(C_2)$ and the classification of surfaces with $\chi(\mathcal{O}_S)=1$ and maximal Albanese dimension. In Section \ref{sec:monodromy} we prove our main results and we make a more detailed analysis of our covers in the cases $n=2, \, 3, \, 4$. It turns out that for $n=2$ and $n=3$ we rediscover some examples recently studied (using different methods) by the author and his collaborators, see \cite{PiPol16}, \cite{PolRiRo17}; on the other hand, for $n=4$ we conjecture that our construction provides new examples of minimal surfaces with $p_g=q=2$, $K^2=6$ and maximal Albanese dimension, that we plan to investigate in a sequel of this work.

\bigskip

\textbf{Notation and conventions.} We work over the field
$\mathbb{C}$ of complex numbers. 
By \emph{surface} we mean a projective, non-singular surface $S$,
and for such a surface $K_S$ denotes the canonical
class, $p_g(S)=h^0(S, \, K_S)$ is the \emph{geometric genus},
$q(S)=h^1(S, \, K_S)$ is the \emph{irregularity} and
$\chi(\mathcal{O}_S)=1-q(S)+p_g(S)$ is the \emph{Euler-Poincar\'e characteristic}.

We say that $S$ is \emph{of maximal Albanese dimension} if its Albanese map $a_S \colon S \to \mathrm{Alb}(S)$ is generically finite onto its image.

If $C$ is a smooth curve, we write $J(C)$ for the Jacobian variety of $C$.

The symbol $\mathsf {S}_n$ stands for the symmetric group on $n$ letters.

\section{Preliminaries} \label{sec:prel}

\subsection{Finite covers and monodromy representations} \label{subsec:covers-mon}

This subsection deals with the classification of branched covers 
$f \colon X \to Y$ of projective varieties via the classification of monodromy representations of the fundamental group $\pi_1(Y-B)$, where $B \subset Y$ is the branch locus of $f$. The main technical tools needed are the Grauert-Remmert extension theorem and the GAGA principle, that we recall below. 

\begin{proposition}[$\textbf{Grauert-Remmert extension theorem}$] \label{th:Grauert}
Let $Y$ be a normal analytic space and $Z \subset Y$ a closed analytic subspace such that $U=Y - Z$ is dense in $Y$. Then any finite, unramified cover $f^{\circ} \colon V \to U$ can be extended to a normal, finite cover $f \colon X \to Y$, and such an extension is unique up to isomorphisms.   
\end{proposition}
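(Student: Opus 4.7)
The plan is to construct the desired extension as the analytic spectrum of a suitable coherent sheaf of $\mathcal{O}_Y$-algebras. Specifically, writing $j \colon U \hookrightarrow Y$ for the open immersion, I would set
\[
\mathcal{A}^{\circ} := f^{\circ}_*\mathcal{O}_V, \qquad \mathcal{A} := j_*\mathcal{A}^{\circ},
\]
and declare $X := \mathbf{Spec}_{\mathrm{an}}(\mathcal{A})$ with $f \colon X \to Y$ the natural structure morphism. Since $f^{\circ}$ is finite and unramified (hence étale), $\mathcal{A}^{\circ}$ is a locally free $\mathcal{O}_U$-algebra of rank $\deg f^{\circ}$. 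Once we know that $\mathcal{A}$ is a coherent $\mathcal{O}_Y$-algebra, the morphism $f$ will automatically be finite, and the identity $j^*\mathcal{A} = \mathcal{A}^{\circ}$ will guarantee that $f$ restricts to $f^{\circ}$ over $U$.

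The main obstacle is the coherence of $\mathcal{A}$. Since the claim is local on $Y$, one argues by stratifying $Z$ according to codimension. At points where $Z$ has codimension at least two in $Y$, the second Riemann extension theorem for coherent sheaves on a normal analytic space yields coherence of $j_*\mathcal{A}^{\circ}$ essentially for free, since sections of $\mathcal{A}^{\circ}$ are locally bounded in a neighborhood of such points. The delicate case is codimension one: after shrinking to a neighborhood of a smooth point of both $Y$ and $Z$, the pair $(Y,Z)$ is locally isomorphic to $(\Delta^n, \{z_1=0\})$, and every finite étale cover of the punctured polydisk decomposes as a disjoint union of cyclic pieces of the form $w^m = z_1$, each of which extends explicitly to a normal finite branched cover of $\Delta^n$. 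These local normal extensions are unique up to canonical isomorphism (any two differ by a Galois automorphism of the cyclic cover, which extends across $Z$), so they glue to a coherent $\mathcal{A}$ in a neighborhood of the generic points of each component of $Z$; applying the higher-codimension Riemann extension once more propagates coherence across the remaining singular and high-codimension strata of $Z$.

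The remaining verifications are comparatively soft. Normality of $X$ follows because the stalks of $\mathcal{A}$ at points of $Z$ are obtained by extending sections across a thin analytic subset of a normal space, and the Hartogs-type characterization of normality for $\mathcal{O}_{Y,y}$ transfers to integral closedness of $\mathcal{A}_y$ inside its total ring of fractions, since integral dependence is a finite set of equations that persists under such extensions. Finally, for uniqueness of the global extension, suppose $f_i \colon X_i \to Y$ ($i=1,2$) are two normal finite covers whose restriction over $U$ recovers $f^{\circ}$. Normality of $X_i$, together with $Y$ being normal and $U$ dense in $Y$, forces
\[
(f_i)_*\mathcal{O}_{X_i} \;=\; j_*\, j^*(f_i)_*\mathcal{O}_{X_i} \;=\; j_*\mathcal{A}^{\circ} \;=\; \mathcal{A},
\]
so both $X_i$ are canonically isomorphic to $\mathbf{Spec}_{\mathrm{an}}(\mathcal{A})$ as covers of $Y$, which yields the desired uniqueness.
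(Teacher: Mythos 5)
The paper offers no proof of this statement: it is quoted as a known theorem with pointers to Grauert--Remmert, SGA1~XII.5.4 and Dethloff--Grauert, so your attempt has to be judged against the standard argument rather than against anything in the text. Your overall strategy --- build a coherent $\mathcal{O}_Y$-algebra extending $f^{\circ}_*\mathcal{O}_V$ and take its analytic spectrum, handling the smooth codimension-one locus of $Z$ via the explicit cyclic covers $w^m=z_1$ and the rest by extension across codimension $\geq 2$ --- is the right shape.

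There is, however, a genuine gap at the central definition: $\mathcal{A}:=j_*\mathcal{A}^{\circ}$ is not a coherent $\mathcal{O}_Y$-algebra, already in the trivial example $Y=\Delta$, $Z=\{0\}$, $V=U=\Delta^*$, $f^{\circ}=\mathrm{id}$, where the stalk of $j_*\mathcal{O}_{\Delta^*}$ at the origin contains $1/z$ and $e^{1/z}$ and is not finitely generated over $\mathcal{O}_{\Delta,0}$. The full direct image retains all sections over punctured neighbourhoods, including those with poles or essential singularities along $Z$; the correct object is the subsheaf of $j_*\mathcal{A}^{\circ}$ of sections that are locally bounded near $Z$, equivalently the integral closure of $\mathcal{O}_Y$ in $j_*\mathcal{A}^{\circ}$, and the substance of the theorem is precisely that \emph{this} subsheaf is coherent. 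The same slip undermines your uniqueness step: the identity $(f_i)_*\mathcal{O}_{X_i}=j_*j^*(f_i)_*\mathcal{O}_{X_i}$ fails in codimension one for the same reason; what normality of $X_i$ actually gives is that $(f_i)_*\mathcal{O}_{X_i}$ equals the bounded (equivalently, integrally closed) part of $j_*\mathcal{A}^{\circ}$, which is the canonical description that does yield uniqueness. Relatedly, the claim that coherence across the codimension-$\geq 2$ strata comes ``essentially for free'' from the second Riemann extension theorem understates the difficulty: near a singular point of $Y$ on $Z$ the sections of $\mathcal{A}^{\circ}$ live on a cover of $U$ rather than on $Y-Z$ itself, so neither their boundedness nor the coherence of the bounded subsheaf is a formal consequence of Riemann extension --- that is exactly the analytic input (Oka/Grauert--Remmert coherence of normalization) supplied by the cited sources. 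With $\mathcal{A}$ redefined as the integral closure, the rest of your outline can be carried through.
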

\begin{proof}
See \cite{GrRem58}, \cite[XII.5.4]{SGA1}, \cite{DeGr94}.
\end{proof}

\begin{proposition}[$\textbf{GAGA principle}$] 
\label{th:GAGA}
Let $X$, $Y$ be projective varieties over $\mathbb{C}$, and $X^{\rm an}$, $Y^{\rm an}$ the underlying complex analytic spaces. Then
\begin{itemize}
\item[$\boldsymbol{(1)}$] every analytic map $X^{\rm an} \to Y^{\rm an}$ is algebraic$;$
\item[$\boldsymbol{(2)}$] every coherent analytic sheaf on $X^{\rm an}$ is algebraic, and its algebraic cohomology coincides with its analytic one.
\end{itemize} 
\end{proposition}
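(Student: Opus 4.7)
The plan is to establish part (2) first, since part (1) will follow by a graph construction. The strategy, due to Serre, is to handle the case $Y=\mathbb{P}^n$ by direct computation, and then bootstrap to arbitrary projective varieties via closed immersions.

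I would treat the projective space case first. Compute, by explicit \v{C}ech cohomology on both sides, the groups $H^i(\mathbb{P}^n,\mathcal{O}(d))$ for all $i$ and $d$, using the standard affine cover on the algebraic side and its Stein thickening on the analytic side; the two complexes have essentially the same underlying finite-dimensional vector spaces, which yields a canonical comparison isomorphism. Next, for an arbitrary coherent algebraic sheaf $\mathcal{F}$, choose a surjection $\bigoplus\mathcal{O}(-d_i)\twoheadrightarrow\mathcal{F}$ (available by Serre's theorem on twists), iterate with syzygies, and a long exact sequence diagram chase extends the comparison to an isomorphism $H^i(\mathbb{P}^n,\mathcal{F})\xrightarrow{\sim}H^i((\mathbb{P}^n)^{\mathrm{an}},\mathcal{F}^{\mathrm{an}})$. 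Finally, I would upgrade to an equivalence of categories $\mathrm{Coh}(\mathbb{P}^n)\xrightarrow{\sim}\mathrm{Coh}((\mathbb{P}^n)^{\mathrm{an}})$: full faithfulness follows because $\mathrm{Hom}(\mathcal{F},\mathcal{G})=H^0(\mathcal{H}om(\mathcal{F},\mathcal{G}))$, and essential surjectivity follows because any coherent analytic sheaf on $\mathbb{P}^n$ admits a surjection from some $\bigoplus\mathcal{O}(-d_i)$ after twisting, so it is the cokernel of a map of algebraic objects and hence analytification of an algebraic sheaf.

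For a general projective variety $X$, choose an embedding $X\hookrightarrow\mathbb{P}^n$ and push coherent sheaves forward along it. Since closed immersion is the same operation in the two categories and preserves cohomology, the statement for $X$ reduces to the statement on $\mathbb{P}^n$ just established. To deduce (1), given $f\colon X^{\mathrm{an}}\to Y^{\mathrm{an}}$, form the graph $\Gamma_f\subset (X\times Y)^{\mathrm{an}}$, which is a closed analytic subspace; by (2) applied to $X\times Y$, its ideal sheaf is algebraic, so $\Gamma_f$ arises from a closed algebraic subvariety $\Gamma\subset X\times Y$. The first projection $\Gamma\to X$ is an analytic isomorphism; since (2) implies that analytification is fully faithful on structure sheaves, it is also an algebraic isomorphism, and composing its inverse with the second projection $\Gamma\to Y$ recovers $f$ as an algebraic morphism.

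The main obstacle is the projective-space base case, and specifically the claim that a coherent analytic sheaf on $\mathbb{P}^n$ admits a globally generated twist. This is where the compactness of $\mathbb{P}^n$ enters essentially: the analogous local statement is Cartan's Theorem A for Stein spaces, but the passage from Stein pieces to the projective statement requires the finiteness of analytic cohomology of coherent sheaves on compact complex spaces, a theorem of Cartan--Serre that is itself non-trivial. Once that finiteness is in hand, the rest of the program proceeds by elementary homological bookkeeping.
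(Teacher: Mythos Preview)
Your sketch is a faithful outline of Serre's original argument and is correct as written; the identification of the Cartan--Serre finiteness theorem as the genuine analytic input, and the use of the graph to deduce (1) from (2), are exactly right. Note, however, that the paper does not actually prove this proposition: its entire proof consists of the citation ``See \cite{Serre56}, \cite[Chapter 6]{Serre08}.'' So there is no argument to compare against---you have supplied the content that the paper deliberately outsources to the literature, and your outline is in line with the approach of those cited references.
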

\begin{proof}
See \cite{Serre56}, \cite[Chapter 6]{Serre08}.  
\end{proof}

The two results above imply the following fact concerning extensions of covers of quasi-projective varieties. With a slight abuse of notation, we write $X$ instead of $X^{\rm an}$. 

\begin{proposition} \label{prop:extension} 
Let $Y$ be a smooth, projective variety over $\mathbb{C}$ and $Z \subset Y$ be a smooth, irreducible divisor. Set $U=Y - Z$. Then any finite, unramified analytic cover $f^{\circ} \colon V \to U$ can be extended in a unique way to a finite cover $f \colon X \to Y,$ branched at most over $Z$. Moreover, there exists on $X$ a unique structure of smooth projective variety that makes $f$ an algebraic finite cover. 
\end{proposition}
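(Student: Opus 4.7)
The plan is to assemble the statement in three moves: extend $f^{\circ}$ analytically using Grauert--Remmert, algebraize the result by GAGA, and then check smoothness locally along $Z$.

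First, since $Y$ is smooth projective, it is in particular a normal analytic space, and $Z$ is a closed analytic subspace with dense complement $U$. Applying Proposition \ref{th:Grauert} directly to $f^{\circ} \colon V \to U$ produces a normal, finite analytic cover $f \colon X \to Y$ extending $f^{\circ}$, unique up to isomorphism. This already settles existence and uniqueness of the extension as an analytic cover branched at most over $Z$.

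Second, to put an algebraic structure on $X$, I would observe that since $f$ is finite the coherent analytic sheaf $\mathcal{A} := f_{*}\mathcal{O}_{X}^{\mathrm{an}}$ carries the structure of a sheaf of $\mathcal{O}_{Y^{\mathrm{an}}}$-algebras. By part $\boldsymbol{(2)}$ of Proposition \ref{th:GAGA}, $\mathcal{A}$ corresponds to a unique coherent algebraic sheaf on $Y$; the analytic multiplication map, being a morphism of coherent analytic sheaves on $Y$, is in fact algebraic, so $\mathcal{A}$ is an $\mathcal{O}_{Y}$-algebra in the algebraic category. Taking its relative spectrum over $Y$ yields a finite algebraic morphism $X^{\mathrm{alg}} \to Y$ whose analytification is canonically identified with $f$; in particular $X^{\mathrm{alg}}$ is projective, being finite over the projective variety $Y$. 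Uniqueness of this algebraic structure is immediate from part $\boldsymbol{(1)}$ of Proposition \ref{th:GAGA}: any isomorphism between the analytifications of two such candidates is automatically algebraic.

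Third, it remains to verify smoothness. Outside $f^{-1}(Z)$ the map $f$ is \'etale, so $X$ is smooth there. Given a point $p \in Z$, the smoothness of $Z$ lets me pick local analytic coordinates $(z_{1}, \ldots, z_{n})$ on a polydisk neighborhood $\Delta$ of $p$ in $Y$ with $Z \cap \Delta = \{z_{1}=0\}$. The punctured polydisk $\Delta - Z$ deformation retracts onto a circle, hence has fundamental group $\mathbb{Z}$, generated by a loop winding around $Z$. Every finite connected topological cover of $\Delta - Z$ is therefore analytically isomorphic to the standard cyclic cover $(w, z_{2}, \ldots, z_{n}) \mapsto (w^{e}, z_{2}, \ldots, z_{n})$ for some $e \geq 1$, whose unique normal extension across $\{w=0\}$ is again a polydisk, and thus smooth. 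Applying this to each connected component of $f^{-1}(\Delta) \to \Delta$ shows that $X$ is smooth along $f^{-1}(Z)$, and hence everywhere.

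The main obstacle is the second step: one has to promote the purely analytic extension provided by Grauert--Remmert to a genuine algebraic cover, and this hinges on the fact that GAGA algebraizes not only the sheaf $\mathcal{A}$ but also its multiplication, so that the relative spectrum is a well-defined algebraic variety. The other two steps are either a direct quotation of Proposition \ref{th:Grauert} or a routine local calculation using the abelian local fundamental group of a punctured polydisk.
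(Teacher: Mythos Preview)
Your proof is correct and follows essentially the same three-step structure as the paper: Grauert--Remmert extension, algebraization via GAGA applied to $f_*\mathcal{O}_X$ as a sheaf of algebras, and a local smoothness check along $Z$. The only differences are expository: you spell out the local cyclic-cover model for smoothness where the paper cites \cite[Lemma 2.1]{EdJS10}, and you obtain projectivity directly from ``finite over projective'' where the paper phrases it via pullback of an ample line bundle.
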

\begin{proof}
By Proposition \ref{th:Grauert}, the cover $f^{\circ} \colon V \to U$ can be extended in a unique way to a finite analytic cover $f \colon X \to Y$. Such a cover corresponds to a coherent analytic sheaf of algebras over the projective variety $Y$; now Proposition \ref{th:GAGA} implies that such a sheaf is algebraic, hence so are $X$ and $f$. 

Since $X$ is normal and $Y$ is smooth, by the purity theorem (\cite[X.3.1]{SGA1}) the branch locus of $f$ is either empty or coincides with the smooth irreducible divisor $Z$. In both cases, a local computation shows that $X$ is a smooth scheme (see \cite[Lemma 2.1]{EdJS10}), then its underlying analytic space is a complex manifold, which is compact because it is a finite analytic cover of the compact manifold $Y$. 

Therefore $X$ is a smooth complete scheme endowed with a finite map $f \colon X \to Y$ onto the projective scheme $Y$. If $L$ is an ample line bundle on $Y$, by \cite[Proposition 1.2.13]{Laz04} it follows that $f^*L$ is an ample line bundle on $X$, so $X$ is a smooth projective variety and we are done.      
\end{proof}

\begin{corollary} \label{cor:monodromy-rep}
Let $Y$ be a smooth projective variety over $\mathbb{C}$ and $Z \subset Y$ be a smooth, irreducible divisor. Then isomorphism classes of connected covers of degree $n$
\begin{equation*}
f \colon X \to Y,
\end{equation*}
branched at most over $Z$, are in bijection to group homomorphisms with transitive image
\begin{equation} \label{eq:monodromy-rep}
\varphi \colon \pi_1(Y - Z) \to \mathsf{S}_n,
\end{equation}
up to conjugacy in $\mathsf{S}_n$. Furthermore, $f$ is a Galois cover if and only if the subgroup $\mathrm{im}\, \varphi$ of $\mathsf{S}_n$ has order $n$, and in this case $\mathrm{im}\, \varphi$ is isomorphic to the Galois group of $f$.
\end{corollary}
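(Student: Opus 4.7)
The plan is to combine Proposition \ref{prop:extension}, which handles the algebraic/analytic extension across $Z$, with the classical Galois correspondence for topological covering spaces of the quasi-projective variety $U:=Y-Z$. The bijection will be set up in two directions and then shown to be mutually inverse.

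First, I would go from covers to representations. Given a connected, degree $n$ cover $f\colon X \to Y$ branched at most over $Z$, the restriction $f^{\circ}\colon V:=f^{-1}(U) \to U$ is a finite unramified analytic cover; since $Y$ is smooth and $Z$ is irreducible, $U$ is a connected smooth quasi-projective variety, and by the purity theorem together with the assumption on the branch locus, $V$ is connected as well (this uses that $X$ is connected and $f$ is finite). Fixing a base point $y_0 \in U$ and a bijection between the fiber $f^{-1}(y_0)$ and $\{1,\ldots,n\}$, the monodromy action of $\pi_1(U,y_0)$ on the fiber yields a homomorphism $\varphi\colon \pi_1(U,y_0) \to \mathsf{S}_n$; connectedness of $V$ translates into transitivity of $\mathrm{im}\,\varphi$, and a change of labeling of the fiber conjugates $\varphi$ by an element of $\mathsf{S}_n$.

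Conversely, I would go from representations to covers. Given a transitive homomorphism $\varphi\colon \pi_1(U,y_0) \to \mathsf{S}_n$, the stabilizer $H:=\mathrm{Stab}_{\varphi}(1)$ is a subgroup of index $n$, and the classical theory of topological covering spaces (applied to the locally nice space $U$) produces a connected, degree $n$, unramified analytic cover $f^{\circ}\colon V \to U$ whose associated monodromy is $\varphi$ up to conjugacy. By Proposition \ref{prop:extension}, $f^{\circ}$ extends uniquely to a finite algebraic cover $f\colon X \to Y$, branched at most over $Z$, with $X$ a smooth projective variety. A short check shows that the two constructions are mutually inverse, which gives the stated bijection.

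For the Galois statement, I would argue as follows. The cover $f$ is Galois precisely when $\mathrm{Aut}(X/Y)$ acts transitively on a (general) fiber, equivalently when the restricted topological cover $f^{\circ}$ is normal in the sense of covering space theory. This happens if and only if $H$ is normal in $\pi_1(U)$, which for a transitive permutation representation on $n$ points is in turn equivalent to $\mathrm{im}\,\varphi$ acting regularly, i.e. $|\mathrm{im}\,\varphi|=n$; in that case $\mathrm{im}\,\varphi \cong \pi_1(U)/H \cong \mathrm{Aut}(X/Y)$, the Galois group of $f$. The main subtlety, and the one place I would be careful, is making sure the extension step really identifies the algebraic Galois group with the topological deck transformation group: this follows from the uniqueness clause in Proposition \ref{prop:extension}, because any deck transformation of $V \to U$ extends uniquely to an automorphism of $X$ over $Y$, and vice versa.
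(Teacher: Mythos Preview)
Your proposal is correct and follows essentially the same approach as the paper: both reduce to the classical correspondence between connected topological covers of $U=Y-Z$ and conjugacy classes of transitive homomorphisms $\pi_1(U)\to\mathsf{S}_n$, and then invoke Proposition~\ref{prop:extension} to pass between unramified covers of $U$ and branched covers of $Y$. The paper's proof is terser, citing \cite{SeiTh80} for the topological step and the Galois criterion, whereas you spell out both directions of the bijection and the identification of the Galois group via the uniqueness clause of Proposition~\ref{prop:extension} more explicitly.
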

\begin{proof}
Set $U=Y-Z$. By \cite[Chapter 8]{SeiTh80} we know that isomorphism classes of degree $n$, connected topological 
covers $f^{\circ}\colon  V \to U$ are in bijection to conjugacy classes of group homomorphisms of type \eqref{eq:monodromy-rep}, and that Galois covers are precisely those such that $\mathrm{im} \, \varphi$ has order $n$. Since $U$ is a complex manifold, we can pull-back its complex structure to $V$, in such a way that $f^{\circ}$ becomes an analytic map. Then, by using Proposition \ref{prop:extension}, we can uniquely extend $f^{\circ}$ to a degree $n$, algebraic finite cover
$f \colon X \to Y$, branched at most over $Z$ and such that $X$ is smooth and projective. This completes the proof. 
\end{proof}
The group homomorphism $\varphi$ is called the \emph{monodromy representation} of the cover $f$, and its image $\mathrm{im}\, \varphi$ is called the $\emph{monodromy group}$ of $f$. By Corollary \ref{cor:monodromy-rep}, if $f$ is a Galois cover then the monodromy group of $f$ is isomorphic to its Galois group. 

\subsection{Braid groups on Riemann surfaces} For more details on the results of this subsection, we refer the reader to \cite{Bel04}.

Let $C_g$ be a compact Riemann surface of genus $g$ and $\mathscr{P} = \{p_1, \ldots, p_k\} \subset C_g$ a set of $k$ distinct points. A \emph{geometric braid} on $C_g$ based at $\mathscr{P}$ is a $k$-ple $(\psi_1, \ldots, \psi_k)$ of paths $\psi_i \colon [0, \, 1] \to C_g$ such that 
\begin{itemize}
\item $\psi_i(0) = p_i, \quad i=1, \ldots, k$;
\item $\psi_i(1) \in \mathscr{P}, \quad i=1, \ldots, k$;
\item the points $\psi_1(t), \ldots, \psi_k(t) \in C_g$ are pairwise distinct for all $t \in [0, \, 1]$.
\end{itemize}     

\begin{definition} \label{def:braid}
The \emph{braid group} on $k$ strings on $C_g$ is the group $\mathsf{B}_k(C_g)$ whose elements are the braids  based at $\mathscr{P}$ and whose operation is the usual product of paths, up to homotopies among braids. 
\end{definition}
It can be shown that $\mathsf{B}_k(C_g)$ does not depend on the choice of the set $\mathscr{P}$. Moreover, there is a group isomorphism 
\begin{equation} \label{eq:iso-braids}
\mathsf {B}_k(C_g) \simeq \pi_1(\textrm{Sym}^k(C_g) - \delta),
\end{equation}
where $\textrm{Sym}^k(C_g)$ denotes the $k$-th symmetric product of $C_g$, namely the quotient of the product $(C_g)^k$ by the natural permutation action of the symmetric group $\mathsf {S}_k$, and $\delta$ stands for the big diagonal in $\textrm{Sym}^k(C_g)$, namely the image of the set   
\begin{equation*}
\Delta= \{(x_1, \ldots, x_k) \, | \, x_i=x_j \; \textrm{for some} \; i \neq j \} \subset (C_g)^k. 
\end{equation*}
We are primarily interested in the case $g=k=2$.
\begin{proposition} \label{prop:generators-braid}
The braid group $\mathsf {B}_2(C_2)$ can be generated by five elements
\begin{equation*}
a_1, \, a_2, \, b_1, \, b_2, \, \sigma
\end{equation*}
subject to the eleven relations below$:$
\begin{equation*} \label{eq:relations}
\begin{split}
(R2) \quad &  \sigma^{-1} a_1 \sigma^{-1} a_1= a_1 \sigma^{-1} a_1 \sigma^{-1} \\ &  \sigma^{-1} a_2 \sigma^{-1} a_2= a_2 \sigma^{-1} a_2 \sigma^{-1} \\ &
\sigma^{-1} b_1 \sigma^{-1} b_1 = b_1 \sigma^{-1} b_1 \sigma^{-1} \\ & \sigma^{-1} b_2 \sigma^{-1} b_2 = b_2 \sigma^{-1} b_2 \sigma^{-1}\\ 
&  \\
(R3) \quad & \sigma^{-1} a_1 \sigma a_2 = a_2 \sigma^{-1} a_1 \sigma \\ &  \sigma^{-1} b_1 \sigma b_2 = b_2 \sigma^{-1} b_1 \sigma \\
& \sigma^{-1} a_1 \sigma b_2 = b_2 \sigma^{-1} a_1 \sigma \\
& \sigma^{-1} b_1 \sigma a_2 = a_2 \sigma^{-1} b_1 \sigma \\
 & \\
(R4) \quad & \sigma^{-1} a_1 \sigma^{-1} b_1 = b_1 \sigma^{-1} a_1 \sigma \\
 & \sigma^{-1} a_2 \sigma^{-1} b_2 = b_2 \sigma^{-1} a_2 \sigma \\
 & \\
 (TR) \quad &  [a_1, \, b_1^{-1}] [a_2, \, b_2^{-1}]= \sigma^2.  
\end{split}
\end{equation*}
\end{proposition}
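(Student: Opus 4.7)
The plan is to follow Bellingeri's derivation in \cite{Bel04}, which builds the presentation from the Fadell--Neuwirth fibration of configuration spaces. Fix a point $q \in C_2$ and consider the fibration
\begin{equation*}
C_2 \setminus \{q\} \longrightarrow F_2(C_2) \longrightarrow C_2,
\end{equation*}
where $F_2(C_2)$ denotes the ordered configuration space of two points on $C_2$ and the map is projection onto the first coordinate. Since $C_2$ and the open fiber are both aspherical, the long exact sequence of homotopy groups collapses to a short exact sequence
\begin{equation*}
1 \to \pi_1(C_2 \setminus \{q\}) \to P_2(C_2) \to \pi_1(C_2) \to 1
\end{equation*}
expressing the pure braid group $P_2(C_2)$ as a group extension, on which one can apply the standard machinery producing a presentation of an extension from presentations of the kernel and of the quotient.

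Next I would recover $\BB$ from $P_2(C_2)$ via the Birman short exact sequence
\begin{equation*}
1 \to P_2(C_2) \to \BB \to \mathsf{S}_2 \to 1,
\end{equation*}
introducing the generator $\sigma$ that swaps the two strings. The elements $a_1, b_1, a_2, b_2$ are obtained by lifting the standard generators $\alpha_1, \beta_1, \alpha_2, \beta_2$ of $\pi_1(C_2)$ (which satisfy $[\alpha_1, \beta_1^{-1}][\alpha_2, \beta_2^{-1}] = 1$) to braids in which the first string traces out the chosen loop while the second string is kept near the basepoint; generators moving the other string are then conjugates by $\sigma$ and need not be listed separately.

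The relations fall into three families corresponding to their geometric origin. The relations (R2), (R3), (R4) encode how the generators of the fiber interact with the lifted generators of the base: geometrically each one records a braid-type isotopy in which one string is dragged along a loop of $C_2$ while the other is held fixed, giving commutators involving $\sigma^{\pm 1}$; they are the surface analogues of the classical commutation and braid relations in the planar case. The crucial relation is (TR): it is the image in $\BB$ of the surface relation $[\alpha_1, \beta_1^{-1}][\alpha_2, \beta_2^{-1}] = 1$ in $\pi_1(C_2)$, but when lifted through the Fadell--Neuwirth extension it acquires an obstruction, measured by a loop around the puncture $q$, which under the isomorphism $\BB \simeq \pi_1(\SS - \delta)$ becomes precisely the meridian $\sigma^2$ around the diagonal.

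The main obstacle is completeness: one must show that these eleven relations actually generate all relations in $\BB$. I would follow Bellingeri's inductive argument, which uses the compatibility of a suitable cell decomposition of $\SS - \delta$ with the fibration structure, combined with a Reidemeister--Schreier analysis for the two extensions above. Verifying (R2), (R3), (R4) geometrically is routine, but checking that nothing is missed in (TR) — in particular that the discrepancy between the base relation and its lift is exactly $\sigma^2$ and not some further correction by a pure braid — requires a careful tracking of basepoints and of the boundary monodromy in the Fadell--Neuwirth fiber.
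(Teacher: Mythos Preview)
Your sketch is consistent with Bellingeri's argument, but note that the paper does not reproduce any of it: its entire proof is a one-line citation of \cite[Theorem~1.2]{Bel04}, specialising the general presentation of $\mathsf{B}_k(C_g)$ to $k=g=2$. So your approach and the paper's are the same in substance---both defer to Bellingeri---only you have unpacked the Fadell--Neuwirth/Birman machinery that the paper leaves inside the reference.
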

\begin{proof}
See \cite[Theorem 1.2]{Bel04}, which provides a finite presentation for the general case $\mathsf {B}_k(C_g)$. 
\end{proof}
Geometrically speaking, the generators of $\mathsf {B}_2(C_2)$ in the statement of Proposition \ref{prop:generators-braid} can be interpreted as follows. The $a_i$ and the $b_i$ are the braids that come from the representation of the topological surface associated with $C_2$ as a polygon of $8$ sides with the standard identification of the edges, whereas $\sigma$ is the classical braid generator on the disk. In terms of the isomorphism \eqref{eq:iso-braids}, the generator $\sigma$ corresponds to the homotopy class in $\textrm{Sym}^2(C_2)-\delta$ of a topological loop that ``winds once around $\delta$".

\subsection{Surfaces of general type with $\chi(\mathcal{O}_S)=1$ and maximal Albanese dimension}

Let us describe now  surfaces with of general type with $p_g(S)=q(S)$ and maximal Albanese dimension.

\begin{proposition} \label{prop:class-surf-max-alb}
Let $S$ be a minimal surface of general type with $\chi (\mathcal{O}_S)=1$ and maximal Albanese dimension. Then we are in one of the  following situations$:$
\begin{itemize}
\item[$\boldsymbol{(1)}$] $p_g(S)=q(S)=4$, $K_S^2=8$ and $S=C_2 \times C_2'$, where $C_2$ and $C_2'$ are smooth curves of genus $2.$ In this case $\mathrm{Alb}(S) \simeq J(C_2) \times J(C_2')$ and  $a_S \colon S \to \mathrm{Alb}(S)$ is the product of the Abel-Jacobi maps of $C_2$ and $C_2'$, hence it is an immersion$;$
\item[$\boldsymbol{(2)}$] $p_g(S)=q(S)=3$, $K_S^2=6$ and $S=\mathrm{Sym}^2(C_3)$, where $C_3$ is a smooth curve of genus $3$. In this case $a_S \colon S  \to \mathrm{Alb}(S)$ is birational and its image is a principal polarization. More precisely, if $C_3$ is not hyperelliptic then $a_S$ is an immersion $($so its image is smooth$)$, whereas if $C_3$ is hyperelliptic then $a_S$ contracts the unique $(-2)$-curve on $S$ corresponding to the $g^1_2$ on $C_3$ $($so the image of $a_S$ has a rational double point of type $A_1);$ 
\item[$\boldsymbol{(3)}$] $p_g(S)=q(S)=3$, $K_S^2=8$ and $S=(C_2 \times C_3)/\mathbb{Z}_2$, where $C_2$ is a smooth curve of genus $2$ with an elliptic involution $\tau_2$, whereas $C_3$ is a smooth curve of genus $3$ with a free involution $\tau_3$ and the cyclic group $\mathbb{Z}_2$ acts freely on the product $C_2 \times C_3$ via the involution $\tau_2 \times \tau_3$. Setting 
\begin{equation*}
B=C_2 / \langle \tau_2 \rangle, \quad W=C_3/\langle \tau_3 \rangle
\end{equation*}
we have $g(B)=1$ and $g(W)=2$. Moreover,
the projections of $C_2 \times C_3$ onto $C_2$ and $C_3$ 
induce fibrations $b \colon S \to B$ and $w \colon S \to W$. The singular fibres of $b$ are two double fibres with smooth support, occurring at the branch points of $C_2 \to B$, whereas all the fibres of $w$ are smooth. Finally, $\mathrm{Alb}(S)$ is isogenous to $J(B) \times J(W);$ 
\item[$\boldsymbol{(4)}$] $p_g(S)=q(S)=2$ and $a_S \colon S \to \mathrm{Alb}(S)$ is a generically finite, branched cover.
\end{itemize}
\end{proposition}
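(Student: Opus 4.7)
The plan is to assemble this classification from the quoted results and then read off the geometry of the Albanese map in each case. Since $S$ has maximal Albanese dimension we have $\dim a_S(S) = 2$, so $\dim \mathrm{Alb}(S) = q(S) \geq 2$. Combined with $\chi(\mathcal{O}_S) = 1$, i.e.\ $p_g(S) = q(S)$, Debarre's inequality (\cite[Th\'eor\`eme 6.1]{Deb82}, quoted in the introduction) yields $2 \leq p_g(S) = q(S) \leq 4$. This gives the three-way split $q \in \{2,3,4\}$ of the statement, and the task reduces to describing each stratum.

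For $p_g = q = 4$ I would invoke the Catanese--Ciliberto--Mendes Lopes theorem \cite{CaCiML98}: any minimal surface with $\chi(\mathcal{O}_S)=1$ and $q=4$ is isomorphic to a product $C_2 \times C_2'$ of two genus $2$ curves, with $K_S^2 = 2 K_{C_2} \cdot K_{C_2'} = 8$. The statement about the Albanese map follows from the universal property: $\mathrm{Alb}(C_2 \times C_2') = J(C_2) \times J(C_2')$ and $a_S$ is the product of the Abel--Jacobi embeddings, each of which is a closed immersion because $g(C_i) \geq 2$.

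For $p_g = q = 3$ I would quote the classification of Pirola \cite{Pir02} (for $K_S^2 = 6$) and Hacon--Pirola \cite{HP02} (for $K_S^2 = 8$), which together say the only two minimal surfaces with $p_g = q = 3$ are $\mathrm{Sym}^2(C_3)$ and the quotient $(C_2 \times C_3)/\mathbb{Z}_2$ described in (3). For case (2) I would recall that the Abel--Jacobi map $\mathrm{Sym}^2(C_3) \to J(C_3)$ has image equal to the theta divisor $\Theta$, which is a principal polarization on $J(C_3)$; it contracts the $g^1_2$-fibre precisely when $C_3$ is hyperelliptic, producing a single node on $\Theta$, and is otherwise an immersion. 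For case (3) I would first verify the numerology ($K_S^2 = K_{C_2 \times C_3}^2/2 = 8$, $\chi(\mathcal{O}_S) = \chi(\mathcal{O}_{C_2 \times C_3})/2 = 1$), then identify the two fibrations $b \colon S \to B$ and $w \colon S \to W$ coming from the two projections of $C_2 \times C_3$; the statement on singular fibres of $b$ follows from analyzing the ramification of $C_2 \to B$ (a double cover of an elliptic curve branched in two points), and the smoothness of all fibres of $w$ follows because $C_3 \to W$ is \'etale. The Albanese is then isogenous to $J(B) \times J(W)$ through the map $S \to B \times W$ composed with Abel--Jacobi.

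The case $p_g = q = 2$ is essentially tautological: $a_S$ is generically finite onto its image by assumption, and $\dim \mathrm{Alb}(S) = q = 2 = \dim S$ forces $a_S$ to be surjective, hence a generically finite surjective morphism between smooth projective surfaces, i.e.\ a branched cover. The only substantive obstacle in the whole argument is checking the geometry of the Albanese map in case (2) (distinguishing the hyperelliptic locus) and verifying the structure of the two fibrations in case (3); everything else is a matter of organizing citations. No further stratification of case (4) is claimed, which is consistent with the fact that surfaces with $p_g = q = 2$ are precisely the class the rest of the paper sets out to investigate.
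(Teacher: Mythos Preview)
Your proof is correct and follows exactly the same strategy as the paper's: use maximal Albanese dimension to get $q(S)\geq 2$, apply Debarre's inequality to bound $p_g=q\leq 4$, and then invoke the existing classifications for $p_g=q=4$ and $p_g=q=3$. The paper's own argument is in fact terser than yours---it simply cites \cite{Be82}, \cite{CaCiML98}, \cite{HP02}, \cite{Pir02} without unpacking the geometry of each case---and note one slip: \cite{HP02} is Hacon--Pardini, not Hacon--Pirola (and the $p_g=q=4$ case is originally Beauville \cite{Be82} rather than \cite{CaCiML98}).
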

\begin{proof}
The fact that $S$ is of maximal Albanese dimension implies $q(S) \geq 2$, so  \cite[Th\'eor\`eme 6.1]{Deb82} yields $p_g(S) \leq 4$. Thus we must only consider the cases $p_g(S)=q(S)=4$, $p_g(S)=q(S)=3$ and $p_g(S)=q(S)=2$. The first two possibilities have been classified in \cite{Be82}, \cite{CaCiML98}, \cite{HP02}, \cite{Pir02}, and they give cases $\boldsymbol{(1)}$,  $\boldsymbol{(2)}$,  $\boldsymbol{(3)}$; the last possibility gives case  $\boldsymbol{(4)}$.    
\end{proof}

\section{Monodromy representations of braid groups and surfaces with $p_g=q=2$} \label{sec:monodromy}

\subsection{Generic covers of $\mathrm{Sym}^2(C_2)$}

Let $C_2$ be a smooth curve of genus $2$ and let $\textrm{Sym}^2(C_2)$ be 
its second symmetric product. The Abel-Jacobi map
\begin{equation*}
\pi \colon \SS \to J(C_2)
\end{equation*}
is birational, more precisely it is the blow-down of the unique rational curve $E \subset \SS$, namely the $(-1)$-curve given by the unique $g^1_2$ on $C_2$. We have $\delta E=6$, because the curve $E$ intersects the diagonal $\delta$ transversally at the six points corresponding to the six Weierstrass points of $C_2$. Writing $\Theta$ for the numerical class of a theta divisor in $J(C_2)$, it follows that the image $D:=\pi_*\delta \subset J(C_2)$ is an irreducible curve with an ordinary sextuple point and no other singularities, whose numerical class is $4 \Theta$ (see \cite[Lemma 1.7]{PiPol16}).  

Using the terminology of \cite{MaPi02}, we can now give the following
\begin{definition} \label{def:generic-cover}
Let $f \colon S \to \SS$ be a connected cover of degree $n$
branched over the diagonal $\delta$, with ramification divisor $R \subset S$. Then $f$ is called \emph{generic} if  
\begin{equation*}
f^* \delta= 2R + R_0,
\end{equation*}
where the restriction $\left.f\right|_{R} \colon R \to \delta$ is an isomorphism and $R_0$ is an effective divisor over which $f$ is not ramified. 
\end{definition}
\bigskip 
Note that generic covers are never Galois, unless $n=2$ (in which case $f^* \delta = 2R$).
Since $\delta$ is smooth, the genericity condition in Definition \ref{def:generic-cover} is equivalent to requiring that the fibre of $f$ over any point of $\delta$ has cardinality $n-1$; thus the restriction morphism 
$\left.f\right|_{R_0} \colon R_0 \to \delta$ is a cover of degree $n-2$. Setting
\begin{equation*}
\Gamma=f^* \delta, \quad Z= f^*E,
\end{equation*}
we infer 
\begin{equation} \label{eq:intersections}
\Gamma^2= n \delta^2 = - 4 n, \quad Z^2 = n E^2 = -n, \quad \Gamma Z = n (\delta E) = 6n. 
\end{equation}
If we write 
\begin{equation*}
\alpha= \pi \circ f \colon S \to J(C_2), 
\end{equation*}
then $\alpha$ is a generically finite cover of degree 
$n$, simply branched over the smooth locus of $D$ and contracting the curve $Z$ to the unique singular point of $D$. The case where $Z$ is irreducible is illustrated in Figure \ref{fig:cover} below.

\begin{center} 
\begin{tikzpicture}[xscale=-1,yscale=-0.25,inner sep=0.7mm,place/.style={circle,draw=black!100,fill=black!100,thick}] 
\draw (-0.7,-0.5) rectangle (0.6,5);

\draw[red,rotate=92,x=6.28ex,y=1ex] (0.9,-0.85) cos (1,0) sin (1.25,1) cos (1.5,0) sin (1.75,-1) cos (2,0) sin (2.25,1) cos (2.5,0) sin (2.75,-1) cos (3,0) sin (3.25,1) cos (3.5,0) sin (3.6,-0.85);
\draw[rotate=92] (0.5,-0.025) .. controls (1.75,0.035) and (2.75,0.035) .. (4,-0.025);

\draw (-6.7,-0.5) rectangle (-5.4,5);

\draw[red,rotate=92,x=6.28ex,y=1ex,xshift=6,yshift=170] (0.9,-0.85) cos (1,0) sin (1.25,1) cos (1.5,0) sin (1.75,-1) cos (2,0) sin (2.25,1) cos (2.5,0) sin (2.75,-1) cos (3,0) sin (3.25,1) cos (3.5,0) sin (3.6,-0.85);
\draw[rotate=92,xshift=6,yshift=170] (0.5,-0.025) .. controls (1.75,0.035) and (2.75,0.035) .. (4,-0.025);

\draw (-6.7,15.5) rectangle (-5.4,21);

\draw[red,xscale=-1,yscale=-4] (6.05,-4.55) node(A0) [place,scale=0.2]{} to [in=5,out=55,looseness=8mm,loop] () to [in=65,out=115,looseness=8mm,loop] () to [in=125,out=175,looseness=8mm,loop] () to [in=185,out=235,looseness=8mm,loop] () to [in=245,out=295,looseness=8mm,loop] () to [in=305,out=355,looseness=8mm,loop] ();

\draw[xscale=-1,yscale=-4] (0.2,-0.1) node(B0) []{\textcolor{red}{\footnotesize{$R$}}};
\draw[xscale=-1,yscale=-4] (0.25,-1.05) node(0B) []{\footnotesize{$Z$}};

\draw[xscale=-1,yscale=-4] (6.25,-0.1) node(B1) []{\textcolor{red}{\footnotesize{$\delta$}}};
\draw[xscale=-1,yscale=-4] (6.25,-1.05) node(1B) []{\footnotesize{$E$}};

\draw[xscale=-1,yscale=-4] (6.4,-5) node(B2) []{\textcolor{red}{\footnotesize{$D$}}};

\draw[xscale=-1,yscale=-4] (-0.9,-0.125) node(C0) []{$S$};
\draw[xscale=-1,yscale=-4] (7,-0.125) node(C1) []{$\quad \quad \quad \;  \SS$};
\draw[xscale=-1,yscale=-4] (7,-5.00) node(C2) []{$\quad \quad J(C_2)$};

\draw[->] (-0.9,2.25) -- (-5.2,2.25) node[midway,above] {$f$};

\draw[->] (-6.05,5.8) -- (-6.05,14.7) node[midway,right] {$\pi$};

\draw[->] (-0.9,4.8) -- (-5.2,15.5) node[midway,below=3pt] {$\alpha$}; 
\end{tikzpicture}

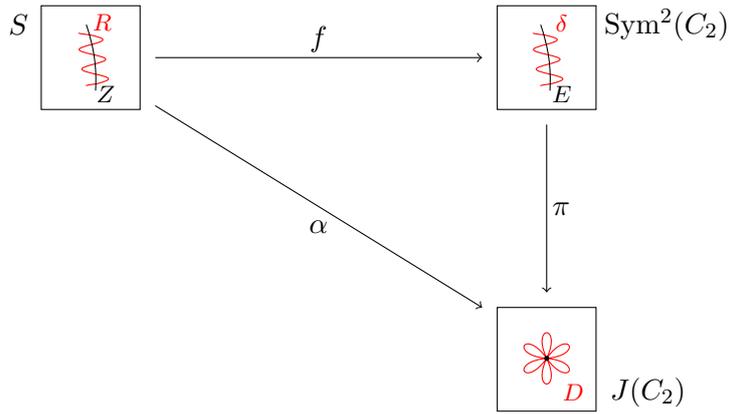
\captionof{figure}{The triple covers $f$ and $\alpha$}
\label{fig:cover} 
\end{center}

\begin{theorem} \label{thm:generic-cover}
Let $f \colon S \to \mathrm{Sym}^2(C_2)$ be a generic cover of degree $n$ and whose branch locus is the diagonal $\delta$. Then $S$ is a surface of maximal Albanese dimension with
\begin{equation*}
\chi(\mathcal{O}_S)=1, \quad K_S^2= 10-n. 
\end{equation*}
Moreover, if $ 2 \leq n \leq 9$ then $S$ is of general type.
\end{theorem}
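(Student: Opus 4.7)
The plan is to compute the basic invariants of $S$ directly from the cover $f$, deduce maximal Albanese dimension from the generically finite morphism $S \to J(C_2)$, and then rule out low Kodaira dimensions via the Enriques--Kodaira classification. All computations feed off the fact that $Y := \SS$ is the blow-up of the abelian surface $J(C_2)$ at a point, giving $K_Y = E$, $K_Y^2 = -1$ and $e(Y) = 1$, combined with $K_Y \cdot \delta = E \cdot \delta = 6$, $\delta^2 = -4$ and $e(\delta) = e(C_2) = -2$.

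For $K_S^2$: since $R_0$ lies in the unramified locus of $f$ by Definition \ref{def:generic-cover}, the ramification divisor is exactly $R$ and $K_S = f^*K_Y + R$. The projection formula together with $f|_R \colon R \xrightarrow{\sim} \delta$ gives $f^*K_Y \cdot R = K_Y \cdot \delta = 6$, and adjunction on the genus-$2$ curve $R$ gives $2 = (K_S + R) R = 6 + 2 R^2$, so $R^2 = -2$. Expanding then produces $K_S^2 = n K_Y^2 + 2 K_Y \cdot \delta + R^2 = -n + 12 - 2 = 10 - n$.

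For $\chi(\mathcal{O}_S)$ I would use Noether's formula after computing $e(S)$ by stratification over $\delta$. The divisors $R$ and $R_0$ are disjoint (an intersection point would lie simultaneously in the ramified and the unramified locus of $f$), so $f^{-1}(\delta) = R \sqcup R_0$ with $R_0 \to \delta$ unramified of degree $n-2$; this gives $e(f^{-1}(\delta)) = -2(n-1)$ and $e(S) = n(e(Y) - e(\delta)) + e(f^{-1}(\delta)) = 3n - 2(n-1) = n + 2$. Then $12 \chi(\mathcal{O}_S) = (10 - n) + (n + 2) = 12$, i.e.\ $\chi(\mathcal{O}_S) = 1$. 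Maximal Albanese dimension is now free: the composition $\alpha = \pi \circ f \colon S \to J(C_2)$ is generically finite of degree $n$, so by the universal property of $\mathrm{Alb}(S)$ the morphism $\alpha$ factors through $a_S$, forcing $a_S$ itself to be generically finite.

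For the last assertion, when $2 \leq n \leq 9$ we have $K_S^2 = 10 - n > 0$. Passing to the minimal model $\bar S$ preserves $\chi = 1$ and maximal Albanese dimension and can only increase $K^2$, and an inspection of the Enriques--Kodaira classification rules out every Kodaira dimension but $2$: rational surfaces have $q = 0$, non-rational ruled surfaces have Albanese dimension $\leq 1$, surfaces of $\kappa = 0$ are either abelian ($\chi = 0$) or satisfy $q \leq 1$, and properly elliptic surfaces have $K^2 = 0$. Hence $\bar S$, and therefore $S$, is of general type. The only delicate step I foresee is the $R^2$ calculation: it hinges both on using Definition \ref{def:generic-cover} to get $K_S = f^*K_Y + R$ without any contribution from $R_0$, and on importing $g(R) = 2$ from the isomorphism $f|_R$.
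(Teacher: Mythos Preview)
Your argument is correct and follows essentially the same path as the paper: Riemann--Hurwitz and adjunction for $K_S^2$, an Euler-characteristic stratification over $\delta$ plus Noether's formula for $\chi(\mathcal{O}_S)$, and the universal property of $\mathrm{Alb}(S)$ for maximal Albanese dimension. The only cosmetic differences are that you read off $R \cap R_0 = \varnothing$ directly from Definition~\ref{def:generic-cover} (the paper instead verifies $R \cdot R_0 = 0$ by an intersection-number computation), and that you establish general type by running through the Enriques--Kodaira classification rather than invoking Beauville's criterion \cite[Proposition~X.1]{Be82} that an irregular surface with $K^2 > 0$ is of general type.
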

\begin{proof}
The canonical class of $S$ is given by
\begin{equation} \label{eq:can-S}
K_S = f^* K_{\SS} + R = f^*E + R = Z + R. 
\end{equation}
The curve $R$ is smooth of genus $2$, so the genus formula and \eqref{eq:can-S} yield
\begin{equation*} 
2=2g(R)-2=R(R+K_S)=R(2R+Z)=2R^2+RZ.
\end{equation*}
On the other hand, by using the projection formula we can write
\begin{equation*}
RZ = R \cdot f^*E = f_*R \cdot E = \delta E = 6, 
\end{equation*}
hence $R^2=-2$. Thus can find $K_S^2$, in fact
\begin{equation} \label{eq:K2}
K_S^2 = (Z+R)^2 = Z^2 + 2RZ + R^2= -n + 12 - 2 = 10-n.
\end{equation}
Now we have to compute $\chi(\mathcal{O}_S)$. Squaring both sides of $2 R + R_0=\Gamma$ yields
\begin{equation} \label{eq:R-1}
4 R R_0 + (R_0)^2= -4 R^2 + \Gamma^2= 8-4n.
\end{equation}
Moreover, again by the projection formula we infer
\begin{equation} \label{eq:R-2}
\Gamma R_0 = f^* \delta \cdot R_0 = \delta \cdot f_* R_0 = \delta \cdot (n-2) \delta = 8-4n. 
\end{equation}
Combining \eqref{eq:R-1} with \eqref{eq:R-2}, we get
\begin{equation*}
0 = 4RR_0+(R_0)^2- \Gamma R_0 = (4R+R_0 - \Gamma)R_0 = 2RR_0. 
\end{equation*}
Therefore the effective curves $R$ and $R_0$ are disjoint. This in turn allows us to compute $c_2(S)$; in fact, writing $\chi_{\rm top}$ for the topological Euler number and recalling that $\left.f\right|_{R_0} \colon R_0 \to \delta$ is \'etale, by additivity we obtain
\begin{equation} \label{eq:c2}
\begin{split}
c_2(S) =\chi_{\rm top}(S) & = \chi_{\rm top}(S - R - R_0) + \chi_{\rm top}(R) + \chi_{\rm top}(R_0)\\
& = n  \cdot \chi_{\rm top}(\SS - \delta) + (-2) + (-2)(n-2) \\
& = 3n - 2 - 2(n-2)=  n+2.
\end{split}
\end{equation}
By using \eqref{eq:K2} and \eqref{eq:c2} together with Noether formula, we get $\chi(\mathcal{O}_S)=1$. 

The surface $S$ is of maximal Albanese dimension because, by the universal property of the Albanese map, the surjective morphism $\alpha \colon S \to J(C_2)$ factors through $a_S \colon S \to \mathrm{Alb}(S)$, so the image of $a_S$ has dimension $2$. In particular we have $q(S) \geq 2$. If $2 \leq n \leq 9$ then $S$ is irregular with $K_S^2>0$, hence of general type by \cite[Proposition X.1]{Be82}.
\end{proof}
The values of $n \in \{2, \ldots, 9\}$ for which generic covers do exist will be given in Theorem \ref{thm:monodromy}. We have at the moment no general method to determine whether the surface $S$ described in Theorem \ref{thm:generic-cover} is minimal or not. A partial result about locating its exceptional curves is the following

\begin{proposition} \label{prop:minimality-S}
The curve $Z=f^*E$ is reducible for $n >4$. Moreover, all $(-1)$-curves of $S$, if any, are components of $Z$. 
\end{proposition}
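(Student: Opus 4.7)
The plan is to prove the two assertions separately, starting with the (easier) claim about $(-1)$-curves. If $C \subset S$ is a $(-1)$-curve then $C \cong \mathbb{P}^1$, and the composition $\alpha|_C \colon C \to J(C_2)$ must be constant, since an abelian surface contains no non-constant rational curves. Hence $\pi(f(C))$ is a single point; since $\pi$ contracts only $E$ and is elsewhere an isomorphism, this forces $f(C) \subseteq E$, and as $f|_C$ is a finite map between irreducible curves we conclude $f(C) = E$. Thus $C \subseteq f^{-1}(E) = \mathrm{Supp}(Z)$, so $C$ is an irreducible component of $Z$.

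For the first assertion I first record the intersection number $Z \cdot K_S$. From $K_S = Z + R$ (equation \eqref{eq:can-S}), together with $Z \cdot R = 6$ (computed in the proof of Theorem \ref{thm:generic-cover}) and $Z^2 = -n$ from \eqref{eq:intersections}, I obtain $Z \cdot K_S = 6 - n$. The strategy is then contradiction: assuming $Z$ is irreducible, I write $Z = m Z_0$ with $Z_0$ reduced and irreducible and $m \geq 1$, which yields $Z_0^2 = -n/m^2$ and $Z_0 \cdot K_S = (6-n)/m$; in particular $m^2 \mid n$ and $m \mid (6-n)$.

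The decisive input is the adjunction formula: since $Z_0$ is integral, its arithmetic genus
\[
p_a(Z_0) = 1 + \tfrac{1}{2}\left(Z_0^2 + Z_0 \cdot K_S\right) = 1 + \tfrac{1}{2}\left(-\tfrac{n}{m^2} + \tfrac{6-n}{m}\right)
\]
must be non-negative. This rearranges to $n(m+1) \leq 2m^2 + 6m$, i.e.\ $n \leq \tfrac{2m(m+3)}{m+1}$. Combined with $m^2 \leq n$, the inequality forces $m \leq 3$, and a direct case check on $m \in \{1,2,3\}$ (using also the divisibility constraints) leaves only the possibilities $n \leq 4$ and the sporadic case $(n,m)=(9,3)$. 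The latter is excluded by Theorem \ref{thm:monodromy}, which asserts that no generic cover exists for $n = 9$, so one concludes $n \leq 4$, contradicting the hypothesis $n > 4$.

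The main obstacle will be the tightness of the numerical case analysis: the adjunction inequality alone is compatible with spurious $(n,m)$-pairs, and one must carefully combine the two divisibility conditions $m^2 \mid n$ and $m \mid (6-n)$ with the inequality (for instance, to rule out $m=2$ with $n>4$ one needs both the bound $n \leq 20/3$ and $4 \mid n$), and moreover appeal to Theorem \ref{thm:monodromy} to dismiss the borderline case $(n,m)=(9,3)$.
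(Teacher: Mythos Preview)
Your proof of the second claim is correct and essentially the same as the paper's; the paper just states it more tersely (``the only rational curve in $\SS$ is $E$''), while you spell out the reason via the map $\alpha$ to the abelian surface.

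For the first claim your argument is correct but considerably more involved than the paper's. The paper simply computes the arithmetic genus of $Z$ itself:
\[
p_a(Z)=\frac{Z(Z+K_S)}{2}+1=\frac{Z(2Z+R)}{2}+1=\frac{-2n+6}{2}+1=4-n,
\]
which is negative for $n>4$, so $Z$ cannot be an integral curve. This is exactly your $m=1$ case. The reason you do not need the cases $m\geq 2$ is that $Z=f^*E$ is automatically \emph{reduced}: the branch locus of $f$ is $\delta$, and $E\neq\delta$, so $f$ is \'etale over the generic point of $E$ and every irreducible component of $f^*E$ occurs with multiplicity one. Hence ``$Z$ irreducible'' already means ``$Z$ integral'', and the single adjunction computation suffices.

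What your route buys is a self-contained argument that does not use this reducedness observation; what it costs is the case analysis and, more awkwardly, the forward reference to Theorem~\ref{thm:monodromy} to kill the sporadic pair $(n,m)=(9,3)$. There is no circularity (Theorem~\ref{thm:monodromy} does not rely on this proposition), but the paper's one-line computation avoids the detour entirely.
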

\begin{proof}
By computing the arithmetic genus of $Z$, we obtain  
\begin{equation}
p_a(Z)=\frac{Z(Z+K_S)}{2}+1 = \frac{Z(2Z+R)}{2}+1 = -n+4.
\end{equation}
For $n >4$ this quantity is negative, hence 
$Z$ is reducible and the first claim follows. The second claim is an immediate consequence of the fact that the only rational curve in $\mathrm{Sym}^2(C_2)$ is $E$.
\end{proof}

\begin{remark}
In the cases $n=2$ and $n=3$ the curve $Z$ is actually irreducible and $S$ is minimal, see \emph{Subsections \ref{subsec:n=2}, \ref{subsec:n=3}}. The irreducibility of $Z$ for $n=4$ is still an open problem, see \emph{Subsection \ref{subsec:n=4}}.
\end{remark}

Let us consider now the case $q(S)=2$.

\begin{proposition} \label{prop:alb}
Let $S$ be as in $\mathrm{Theorem}$ $\mathrm{\ref{thm:generic-cover}}$, and assume in addition that $q(S)=2$. Then $\mathrm{Alb}(S)$ is isogenous to $J(C_2)$, more precisely there exists an isogeny $\beta \colon \mathrm{Alb}(S) \to J(C_2)$ such that $\alpha = \beta \circ a_S$. In particular, if $n$ is prime then $a_S$ coincides with $\alpha$, up to automorphisms of $J(C_2)$.
\end{proposition}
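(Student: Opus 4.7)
The strategy is to exploit the universal property of the Albanese map, together with the dimensional coincidence $\dim \mathrm{Alb}(S) = q(S) = 2 = g(C_2) = \dim J(C_2)$.

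First, since $\alpha = \pi \circ f \colon S \to J(C_2)$ is a morphism to an abelian variety, the universal property of the Albanese variety yields a morphism $\beta \colon \mathrm{Alb}(S) \to J(C_2)$, unique up to translation and a homomorphism of abelian varieties, such that $\alpha = \beta \circ a_S$. This is exactly the commutative factorization required by the statement.

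Next I would verify that $\beta$ is an isogeny. The map $\alpha$ is surjective, since $f$ is surjective (being a finite cover) and $\pi$ is surjective (being birational). Hence $\beta \circ a_S = \alpha$ is surjective, which forces $\beta$ to be surjective onto $J(C_2)$. Since $\mathrm{Alb}(S)$ and $J(C_2)$ both have dimension $2$, a surjective homomorphism between them has finite kernel, i.e., is an isogeny. This proves the first assertion.

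For the primality statement, I would compare degrees in the factorization $\alpha = \beta \circ a_S$, obtaining $n = \deg(a_S) \cdot \deg(\beta)$. By Theorem \ref{thm:generic-cover}, in the range $2 \leq n \leq 9$ the surface $S$ is of general type, so $S$ cannot be birational to the abelian surface $\mathrm{Alb}(S)$ (the Kodaira dimensions disagree); this rules out $\deg(a_S) = 1$. When $n$ is prime the equality $n = \deg(a_S) \cdot \deg(\beta)$ then forces $\deg(a_S) = n$ and $\deg(\beta) = 1$, so $\beta$ is an isomorphism of abelian varieties, and $a_S$ coincides with $\alpha$ once one uses $\beta$ to identify $\mathrm{Alb}(S)$ with $J(C_2)$. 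The main conceptual hurdle is precisely this last step, namely excluding the birational case $\deg(a_S) = 1$; the general-type hypothesis makes it automatic but underscores that the primality conclusion relies on the range of $n$ in which $S$ is of general type.
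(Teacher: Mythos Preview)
Your argument is correct and follows essentially the same route as the paper: invoke the universal property of the Albanese map to produce $\beta$, check surjectivity and equal dimensions to conclude $\beta$ is an isogeny, and then use the multiplicativity of degrees $n=\deg(a_S)\cdot\deg(\beta)$ together with the fact that $S$ is not birational to an abelian surface to force $\deg(\beta)=1$ when $n$ is prime.

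The one point worth sharpening is your exclusion of $\deg(a_S)=1$. You appeal to the fact that $S$ is of general type, which Theorem~\ref{thm:generic-cover} only guarantees for $2\leq n\leq 9$, and you flag this as a limitation. The paper avoids this restriction entirely: it simply observes that $\chi(\mathcal{O}_S)=1$ (valid for \emph{every} $n$ by Theorem~\ref{thm:generic-cover}) while any abelian surface has $\chi=0$, and since $\chi(\mathcal{O})$ is a birational invariant, $S$ cannot be birational to $\mathrm{Alb}(S)$. So your concern that the primality conclusion ``relies on the range of $n$ in which $S$ is of general type'' is unnecessary; the cleaner invariant $\chi(\mathcal{O}_S)=1$ settles the matter uniformly.
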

\begin{proof}
Since $q(S)=2$, the Albanese variety $\mathrm{Alb}(S)$ is an abelian surface, so $a_S \colon S \to \mathrm{Alb}(S)$ is generically finite and, by the universal property, there is an isogeny $\beta \colon \mathrm{Alb}(S) \to J(C_2)$ such that the following diagram commutes:
\begin{equation} \label{dia.albanese}
\begin{split}
\xymatrix{
S \ar[r]^-{a_S} \ar[dr]_{\alpha} & \mathrm{Alb}(S) \ar[d]^{\beta} \\
 & J(C_2).}
\end{split}
\end{equation}
In particular, $\deg \beta$ divides $n$. If $n$ is prime, since $S$ is not birational to an abelian surface (recall that $\chi(\mathcal{O}_S)=1$) we get $\deg \beta =1$; this means that $\beta$ is a birational morphism between abelian surfaces, hence an isomorphism.   
\end{proof}

Recall that an \emph{irrational pencil} (or \emph{irrational fibration}) 
on a smooth, projective surface is a surjective morphism with connected fibres over a curve of positive genus.
\begin{proposition} \label{prop:irrational-pencil}
Let $S$ be as in $\mathrm{Theorem}$ $\mathrm{\ref{thm:generic-cover}}$ and assume that $q(S)=2$. If $\phi \colon S \to W$ is an irrational pencil on $S$, then $g(W)=1$. Moreover, 
the general surface $S$ contains no irrational pencils at all. 
\end{proposition}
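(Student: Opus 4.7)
The idea is to transfer irrational pencils on $S$ to quotients of $\mathrm{Alb}(S)$, which by Proposition \ref{prop:alb} is isogenous to $J(C_2)$; the second assertion will then follow from the simplicity of the Jacobian of a general curve of genus $2$.

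The plan is, first, to let $\phi \colon S \to W$ be an irrational pencil and consider the composition $S \to W \to J(W)$ with an Abel--Jacobi embedding. By the universal property of the Albanese variety, this factors as $\psi \circ a_S$ for some homomorphism $\psi \colon \mathrm{Alb}(S) \to J(W)$. Since $\phi$ is surjective with connected fibres, the image of $S$ in $J(W)$ coincides with the Abel--Jacobi image of $W$, which generates $J(W)$ as an abelian variety; therefore $\psi$ is surjective and in particular $g(W) \leq q(S) = 2$. To exclude $g(W) = 2$, note that in that case $\psi$ would be a surjective morphism of abelian surfaces, hence an isogeny, so finite; but then $\psi$ would send the surface $a_S(S) \subset \mathrm{Alb}(S)$ (which is $2$-dimensional by maximal Albanese dimension) onto a curve, namely the Abel--Jacobi image of $W$, contradicting the finiteness of $\psi$. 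Hence $g(W) = 1$.

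For the generality statement, the previous paragraph reduces the question to the existence of a surjection $\mathrm{Alb}(S) \to E$ onto an elliptic curve, equivalently to the existence of a one-dimensional abelian subvariety of $\mathrm{Alb}(S)$. Via the isogeny $\beta$ supplied by Proposition \ref{prop:alb}, such a subvariety exists if and only if $J(C_2)$ is not simple. It is a classical fact that the locus in the moduli space $\mathcal{M}_2$ of curves whose Jacobian decomposes up to isogeny as a product of elliptic curves is a countable union of proper closed subvarieties (pullback of the Humbert surfaces in $\mathcal{A}_2$); hence for a general $C_2$ the Jacobian $J(C_2)$ is simple and $S$ contains no irrational pencil at all. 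The main obstacle is arguably just this last step, namely knowing that simplicity of $J(C_2)$ is the general behaviour in $\mathcal{M}_2$: the statement is standard and can be cited, but one must be careful to interpret ``general'' as ``outside a countable union of proper subvarieties'' rather than ``outside a Zariski-closed proper subvariety''.
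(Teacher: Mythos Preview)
Your proof is correct and follows essentially the same route as the paper: factor the composition $S \to W \hookrightarrow J(W)$ through the Albanese map, rule out $g(W)=2$ by observing that the induced map $\mathrm{Alb}(S) \to J(W)$ would then be an isogeny collapsing the $2$-dimensional image $a_S(S)$ onto a curve, and conclude the generality statement from the simplicity of $J(C_2)$ for general $C_2$ via Proposition~\ref{prop:alb}. Your exclusion of $g(W)=2$ is in fact spelled out more carefully than the paper's terse phrasing, and your remark about ``general'' meaning the complement of a countable union is a useful caveat; the paper simply cites \cite{Ko76} for the simplicity of the generic Jacobian.
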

\begin{proof}
We borrow the following argument from \cite[Proposition 1.9]{PiPol16}. 
Since $q(S)=2$, we have either $g(W)=1$ or $g(W)=2$. The latter case must be excluded: otherwise, using the embedding $W \hookrightarrow J(W)$ and the universal property, we would obtain a morphism of abelian surfaces $\mathrm{Alb}(S) \to J(W)$ with image isomorphic to the genus $2$ curve $W$, contradiction. Then $g(W)=1$ and $\mathrm{Alb}(S)$ must be a non-simple abelian surface. On the other hand, by Proposition \ref{prop:alb} we know that $\mathrm{Alb}(S)$ is isogenous to $J(C_2)$, and the latter surface is simple for a general choice of the curve $C_2$, see \cite[Theorem 3.1]{Ko76}. So, for a general choice of $S$, the Albanese variety $\mathrm{Alb}(S)$ is also simple and there are no irrational pencils on $S$. 
\end{proof}

We are now ready to apply the theory developed in Subsection \ref{subsec:covers-mon} in order to produce generic covers $f \colon S \to \SS$. 
\begin{definition} \label{def:generic-monodromy}
A \emph{generic monodromy representation} of the braid group $\mathsf {B}_2(C_2)$ is a group homomorphism 
\begin{equation*}
\varphi \colon \BB \to \mathsf{S}_n
\end{equation*}
with transitive image and such that $\varphi(\sigma)$ is a transposition.
\end{definition}
Generic covers and generic monodromy representations are related by the following

\begin{theorem} \label{thm:monodromy}
Isomorphism classes of generic covers of degree $n$ 
\begin{equation*}
f \colon S \to \SS,
\end{equation*}
with branched locus $\delta$, are in bijective correspondence to generic monodromy representations
\begin{equation*}
\varphi \colon \mathsf {B}_2(C_2) \to \mathsf {S}_n,
\end{equation*}    
up to conjugacy in $\mathsf {S}_n$. 
For $2 \leq n \leq 9$, the number of such representations is given in the table below$:$  
\begin{table}[H]
\begin{center}
\begin{tabular}{c|c|c|c|c|c|c|c|c}
%\hline
$ n$ & $2$ & $3$ & $4$ & $5$ & $6$ & $7$ & $8$ & $9$ \\
 \hline
$\textrm{Number of}$ $\varphi$ & $16$ & $3 \cdot 80$ & $6 \cdot 480$ & $0$ & $15 \cdot 2880$  & $0$ & $28 \cdot 172800$ & $0$\\
\end{tabular}
\end{center}
\end{table}
\end{theorem}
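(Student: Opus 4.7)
The plan is to reduce the bijective correspondence to Corollary \ref{cor:monodromy-rep} combined with the braid-group identification \eqref{eq:iso-braids}, and then to describe the \verb|GAP4| enumeration that produces the table. I would start by invoking Corollary \ref{cor:monodromy-rep} with $Y=\mathrm{Sym}^2(C_2)$ and $Z=\delta$: the diagonal $\delta$ is a smooth irreducible divisor in $\mathrm{Sym}^2(C_2)$, being the isomorphic image of the diagonal of $C_2 \times C_2$ under the quotient map, so the corollary yields a bijection between isomorphism classes of connected degree $n$ covers of $\mathrm{Sym}^2(C_2)$ branched at most over $\delta$ and $\mathsf{S}_n$-conjugacy classes of transitive homomorphisms $\pi_1(\mathrm{Sym}^2(C_2)-\delta) \to \mathsf{S}_n$. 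Composing with the isomorphism $\pi_1(\mathrm{Sym}^2(C_2)-\delta) \simeq \mathsf{B}_2(C_2)$ delivers the desired homomorphisms $\varphi \colon \mathsf{B}_2(C_2) \to \mathsf{S}_n$ with transitive image, up to conjugacy.

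Next I would translate the genericity condition on $f$ into the condition that $\varphi(\sigma)$ is a transposition. Since $\sigma$ is, by construction, the homotopy class of a small loop winding once around $\delta$, the permutation $\varphi(\sigma)$ records the local monodromy of $f$ near a generic point $p \in \delta$: a small analytic disk transverse to $\delta$ at $p$ pulls back under $f$ to a disjoint union of smaller disks, one for each cycle of $\varphi(\sigma)$, and a $k$-cycle contributes a single preimage point of $p$ with ramification index $k$. Hence the decomposition $f^*\delta = 2R + R_0$ with $f|_R$ an isomorphism and $f|_{R_0}$ étale is equivalent to $\varphi(\sigma)$ having cycle type $(2,1,\dots,1)$, i.e. being a transposition. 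This closes the first half of the theorem.

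Finally, for the numerical table I would exploit Bellingeri's presentation of $\mathsf{B}_2(C_2)$ (Proposition \ref{prop:generators-braid}) to identify a generic monodromy representation with a 5-tuple of permutations $(\varphi(a_1), \varphi(a_2), \varphi(b_1), \varphi(b_2), \varphi(\sigma))$ satisfying the eleven relations, with $\varphi(\sigma)$ a transposition and the whole tuple generating a transitive subgroup of $\mathsf{S}_n$. A short \verb|GAP4| script (reproduced in the Appendix) fixes $\varphi(\sigma) = (1,2)$ to break the $\mathsf{S}_n$-conjugation symmetry on transpositions, enumerates the quadruples $(\varphi(a_1), \varphi(a_2), \varphi(b_1), \varphi(b_2))$ completing this to a solution of the relations, tests transitivity, and computes orbits under the action of the centralizer of $(1,2)$ in $\mathsf{S}_n$; multiplying by the $\binom{n}{2}$ choices of transposition accounts for the factorized shape of the entries in the table. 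The main obstacle is computational rather than conceptual: the search space scales roughly as $(n!)^4$, so the case $n=9$ already takes several hours, and the vanishing at $n \in \{5,7,9\}$ appears to lack an a priori theoretical explanation within this framework.
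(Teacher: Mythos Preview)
Your proposal is correct and follows essentially the same approach as the paper: invoke Corollary~\ref{cor:monodromy-rep} together with the isomorphism~\eqref{eq:iso-braids} for the bijection, then enumerate with the \verb|GAP4| script fixing $\varphi(\sigma)=(1\,2)$ and multiply by $\binom{n}{2}$. You in fact spell out more than the paper does---in particular the local monodromy argument linking the genericity condition on $f$ to $\varphi(\sigma)$ being a transposition, which the paper leaves implicit in Definition~\ref{def:generic-monodromy}---but this is added detail along the same route, not a different one.
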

\begin{proof}
The first part of the statement is an immediate consequence of Corollary \ref{cor:monodromy-rep} and isomorphism \eqref{eq:iso-braids}. The computation of number of monodromy representations with $\varphi(\sigma)=(1 \, 2)$  was done by using a short \verb|GAP4| script, that the reader can find in the Appendix. The total number of representations is obtained by multiplying such a number by the number of transpositions in $\mathsf{S}_n$, which is $n(n-1)/2$. 
\end{proof}

As an immediate consequence of Theorem \ref{thm:monodromy}, we can now state the following non-existence result.
\begin{corollary} \label{cor:no-surfaces}
Let $n \in \{5, \, 7, \, 9 \}$. Then there exist no surfaces with $p_g=q=2$ whose Albanese map is a generically finite, degree $n$ cover of  $J(C_2)$ simply branched over the smooth locus of the curve $D$.
\end{corollary}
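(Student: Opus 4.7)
The plan is to argue by contradiction, reducing the statement to the vanishing entries in the table of Theorem \ref{thm:monodromy}. Suppose such a surface $S$ exists, so there is a generically finite, degree $n$ morphism $\alpha \colon S \to J(C_2)$ simply branched over the smooth locus of $D$. The strategy is to show that $\alpha$ lifts canonically through the blow-up $\pi \colon \SS \to J(C_2)$ to a morphism $f \colon S \to \SS$ which qualifies as a generic cover in the sense of Definition \ref{def:generic-cover}. Once such an $f$ is produced, the cases $n \in \{5,\,7,\,9\}$ immediately contradict Theorem \ref{thm:monodromy}.

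First I would construct the lift. Since $\pi$ is the blow-up of $J(C_2)$ at the unique sextuple point $p_0$ of $D$, with exceptional curve $E$, the composition $\pi^{-1}\circ \alpha$ is a rational map from $S$ to $\SS$ that is regular on the open set $S \setminus \alpha^{-1}(p_0)$. Because $S$ is a smooth surface and $\alpha^{-1}(p_0)$ has codimension at least one, the inverse image ideal sheaf $\alpha^{-1}\mathfrak{m}_{p_0}\cdot \mathcal{O}_S$ is invertible, and the universal property of the blow-up yields a morphism $f \colon S \to \SS$ with $\pi \circ f = \alpha$.

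Next I would verify that $f$ is a generic cover branched over $\delta$. Outside $E$, the map $\pi$ is an isomorphism carrying $\delta$ birationally onto $D$; hence the branch divisor of $f$ over $\SS \setminus E$ is precisely $\delta \setminus (\delta \cap E)$, and simple branching of $\alpha$ over the smooth locus of $D$ translates into simple branching of $f$ along $\delta$. Consequently $f^*\delta = 2R + R_0$, with $R \to \delta$ an isomorphism and $R_0 \to \delta$ \'etale, matching Definition \ref{def:generic-cover}. Applying Theorem \ref{thm:monodromy} to the generic cover $f$ would give a generic monodromy representation $\varphi \colon \BB \to \mathsf{S}_n$, which for $n \in \{5,\,7,\,9\}$ does not exist, a contradiction.

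The main obstacle is the lifting step. Here one must argue carefully that no ramification of $\alpha$ is concealed in the fiber over $p_0$ and that the lift $f$ is actually a finite cover rather than only generically finite; this is controlled by the hypothesis that $\alpha$ is branched only along the smooth locus of $D$, so that the exceptional fiber $\alpha^{-1}(p_0)$ interacts with the cover precisely in the manner required by Definition \ref{def:generic-cover}.
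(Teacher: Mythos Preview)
Your overall strategy---reduce to the vanishing entries in Theorem~\ref{thm:monodromy}---is exactly what the paper does; its own proof is a one-line appeal to that theorem, relying implicitly on the correspondence between such Albanese maps $\alpha$ and generic covers of $\SS$ already set up in the discussion preceding the corollary.

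Where your write-up goes wrong is the justification of the lift. The sentence ``because $S$ is a smooth surface and $\alpha^{-1}(p_0)$ has codimension at least one, the inverse image ideal sheaf $\alpha^{-1}\mathfrak{m}_{p_0}\cdot\mathcal{O}_S$ is invertible'' is not a valid inference. On a surface, ``codimension at least one'' is automatic for any proper closed subset and carries no information; and even when the fibre is a divisor, invertibility of the pulled-back ideal is not guaranteed. Worse, if $\alpha$ were finite and unramified in a neighbourhood of $p_0$ (which is exactly what ``branched only over the smooth locus of $D$'' would suggest if $\alpha$ contracts nothing), then $\alpha^{-1}(p_0)$ would be $n$ reduced points, the pulled-back ideal would be the maximal ideal at each, and $\alpha$ would \emph{not} factor through the blow-up without first modifying $S$. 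So the universal-property argument, as written, fails.

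A route closer to the paper's toolkit is to restrict to the \'etale locus $J(C_2)\setminus D\cong \SS\setminus(\delta\cup E)$ and invoke Proposition~\ref{prop:extension} to extend the unramified cover over all of $\SS$; one must then argue that the extension has simple branching along $\delta$, no branching along $E$, and that the resulting smooth projective surface is $S$ itself (not just birational to it). You rightly flag the lifting step as ``the main obstacle'', but the difficulty is not only that the lift might fail to be finite---it is that the factorisation through the blow-up need not exist in the first place, and your stated reason for its existence should be withdrawn.
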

\begin{proof}
If $n \in \{5, \, 7, \, 9 \}$, by Theorem \ref{thm:monodromy} there are no generic covers $f \colon S \to \SS$ of degree $n$ and  branched over $\delta$. 
\end{proof}

Finally, let us describe the situation in more details when $n \in \{2, \, 3, \, 4 \}$. 
\subsection{The case $n=2$} \label{subsec:n=2}
In this case we are looking for generic monodromy representations
\begin{equation*}
\varphi \colon \mathsf {B}_2(C_2) \to \mathsf {S}_2 = \{(1), \, (1 \, 2)\}.
\end{equation*} 
Since $\mathsf{B}_2(C_2)$ is generated by five elements $a_1$, $a_2$, $b_1$, $b_2$, $\sigma$ and necessarily $\varphi(\sigma)=(1 \, 2)$, we immediately see that there are $2^4=16$ possibilities for $\varphi$. The group $\mathsf{S}_2$ is abelian, so there is no conjugacy relation to consider and we get sixteen isomorphism classes of double covers $f \colon S \to \SS$, branched over $\delta$ and with 
\begin{equation*}
\chi(\mathcal{O}_S)=1, \quad K_S^2=8.
\end{equation*}
These covers correspond to the sixteen square roots of $\delta$ in  the Picard group of $\SS$; all of them give minimal surfaces by Proposition \ref{prop:minimality-S}, since $Z$ is a smooth, irreducible curve of genus $2$.  One cover coincides with the natural projection $f \colon C_2 \times C_2 \to \SS$, in fact 
\begin{equation*}
p_g(C_2 \times C_2) = q(C_2 \times C_2) =4, \quad K_{C_2 \times C_2}=8.
\end{equation*}
We claim that the remaining fifteen covers are surfaces with
\begin{equation*}
p_g(S)=q(S)=2, \quad K_S^2=8.
\end{equation*}
Indeed, otherwise, $p_g(S)=q(S)=3$ and $S$ would belong to case $\boldsymbol{(3)}$ of Proposition \ref{prop:class-surf-max-alb}, in particular it would admit an irrational pencil $\phi \colon S \to W$ with $g(W)=2$, contradicting Proposition \ref{prop:irrational-pencil}.  By Proposition \ref{prop:alb}, the  Albanese map of $S$ is generically finite of degree $2$ onto $J(C_2)$. 

These surfaces are studied in \cite[Section 2]{PolRiRo17}.

\subsection{The case $n=3$} \label{subsec:n=3}
In this case we are looking for generic monodromy representations
\begin{equation*}
\varphi \colon \mathsf{B}_2(C_2) \to \mathsf{S}_3, 
\end{equation*} 
up to conjugacy in $\mathsf{S}_3$. 

The output of our \verb|GAP4| script shows that if $\varphi(\sigma)=(1 \, 2)$ there are $80$ different choices for $\varphi$,  so the total  number of monodromy representations is $3 \cdot 80 =240$. For every such a representation we have $\mathrm{im}\, \varphi = \mathsf{S}_3$.

The \verb|GAP4| script also shows that each orbit for the conjugacy action of $\mathsf{S}_3$ on the set of monodromy representations consists of six elements, and consequently the orbit set has cardinality $240/6 = 40$. 

By Theorem 	\ref{thm:monodromy}, this implies that there are $40$ isomorphism classes of generic covers $f \colon S \to \SS$ of degree $3$ and branched over $\delta$. For all of them, the surface $S$ satisfies
\begin{equation*}
  p_g(S)=q(S)=2, \quad K_S^2=7 
\end{equation*}
and, by Proposition \ref{prop:alb}, its Albanese map is a generically finite cover of degree $3$ onto $J(C_2)$. These surfaces were studied in \cite{PiPol16}, where it is proved, with different methods, that they are all minimal (it turns out that $Z$ is a smooth, irreducible curve of genus $1$) and lie in the same deformation class. In fact, their moduli space is a connected, quasi-finite cover of degree $40$ of $\mathcal{M}_2$, the coarse moduli space of curves of genus $2$.

\subsection{The case $n=4$} \label{subsec:n=4}
In this case we are looking for generic monodromy representations
\begin{equation*}
\varphi \colon \mathsf{B}_2(C_2) \to \mathsf{S}_4, 
\end{equation*} 
up to conjugacy in $\mathsf{S}_4$.  

The output of our \verb|GAP4| script shows that if $\varphi(\sigma)=(1 \, 2)$ there are $480$ different choices for $\varphi$,  so the total  number of monodromy representations is $6 \cdot 480 =2880$. For every such a representation we have $\textrm{im} \, \varphi \simeq \mathsf{D}_8$, the dihedral group of order $8$. 

The \verb|GAP4| script also shows that each orbit for the conjugacy action of $\mathsf{S}_4$ on the set of monodromy representations consists of $12$ elements, and consequently the orbit set has cardinality $2880/12 = 240$.  

By Theorem 	\ref{thm:monodromy}, this implies that there are $240$ isomorphism classes of generic covers $f \colon S \to \SS$ of degree $4$ and branched over $\delta$. For all of them, the surface $S$ satisfies
\begin{equation*}
 \chi(\mathcal{O}_S)=1, \quad K_S^2=6. 
\end{equation*}
We do not know whether the curve $Z$ is irreducible or not. However, we conjecture that, at least for some of these covers, $S$ is a minimal model with $p_g(S)=q(S)=2$, and this would provide new examples of surfaces with these invariants and maximal Albanese dimension. We will not try to develop this point here, planning to come back to the problem in a sequel of this paper. 

\bigskip

$\mathbf{Acknowledgements.}$ The author was partially supported by GNSAGA-INdAM.

He thanks the organizers of the workshop \emph{Birational Geometry of Surfaces} (University of Rome Tor Vergata, January 2016) for the invitation and the hospitality.   

He is also indebted with ``abx", ``aglearner", Ariyan Javanpeykar, Stefan Behrens and Mohan Ramachandran for interesting discussions on several \verb|MathOverflow| threads, with R. Pardini and V. Coti Zelati for their support during the editing process and with the anonymous referee for helpful comments and remarks.

\section*{Appendix: the GAP4 script}

This short appendix contains the \verb|GAP4| script used in the paper. We explicitly write down the version for $n=3$. For the other cases, it suffices to change the first line \verb|n=3| to the desired value of $n$. 
\begin{verbatim}
 n:=3;;
 G:=SymmetricGroup(n);; t:=0;;
 s:=(1,2);;
 R:=[];; 
 for a1 in G do
 Ga1:=Centralizer(G, s*a1*s); 
 for b1 in Ga1 do
 Gb1:=Centralizer(G, s*b1*s);
 Ga1b1:=Intersection(Ga1, Gb1);
 for a2 in Ga1b1 do
 Ga2:=Centralizer(G, s*a2*s);
 Ga1b1a2:=Intersection(Ga1b1, Ga2);
 for b2 in Ga1b1a2 do
 H:=Subgroup(G, [s, a1, a2, b1, b2]); 
 R21 := s^(-1)*a1*s^(-1)*a1*(a1*s^(-1)*a1*s^(-1))^(-1);
 R22 := s^(-1)*a2*s^(-1)*a2*(a2*s^(-1)*a2*s^(-1))^(-1);
 R23 := s^(-1)*b1*s^(-1)*b1*(b1*s^(-1)*b1*s^(-1))^(-1);
 R24 := s^(-1)*b2*s^(-1)*b2*(b2*s^(-1)*b2*s^(-1))^(-1);
 TR := a1*b1^(-1)*a1^(-1)*b1*a2*b2^(-1)*a2^(-1)*b2*s^(-2);
 if  IsTransitive(H, [1..n])=true and 
 Order(R21)=1 and 
 Order(R22)=1 and 
 Order(R23)=1 and 
 Order(R24)=1 and 
 Order(TR)=1 then
 AddSet(R, [s, a1, b1, a2, b2]); 
 t:=t+1; Print(IdSmallGroup(H), " "); Print(t, "\n"); 
 fi; od; od; od; od;
 Size(R);
 H:=Centralizer(G, (1, 2));;
 T:=OrbitsDomain(H, R, OnTuples);;
 Size(T); 
 \end{verbatim}
\bigskip

%%%%%%%%%%%%%%%%% (bibliography) %%%%%%%%%%%%%%%%%%%%%%%%%%%%%

\bigskip
\bigskip

Francesco Polizzi \\ Dipartimento di Matematica e Informatica,
Universit\`{a} della
Calabria \\ Cubo 30B, 87036 Arcavacata di Rende (Cosenza), Italy.\\
\emph{E-mail address:} \verb|polizzi@mat.unical.it|

\end{document}